\newtheorem{theorem}{Theorem}[section]
\newtheorem{corollary}[theorem]{Corollary}
\theoremstyle{definition}
\theoremstyle{remark}
\newtheorem{remark}[theorem]{Remark}
\numberwithin{equation}{section}
\begin{document}
\setcounter{page}{1}

\title[$L^2$-Maximal functions on graded Lie groups]{$L^2$-Maximal functions on graded Lie groups}

\author[D. Cardona]{Duv\'an Cardona}
\address{
  Duv\'an Cardona:
  \endgraf
  Department of Mathematics: Analysis, Logic and Discrete Mathematics
  \endgraf
  Ghent University, Belgium
  \endgraf
  {\it E-mail address} {\rm duvanc306@gmail.com, duvan.cardonasanchez@ugent.be}
  }

\thanks{The author was supported by the Research Foundation-Flanders
(FWO) under the postdoctoral
grant No 1204824N.  The author was supported  by the FWO  Odysseus  1  grant  G.0H94.18N:  Analysis  and  Partial Differential Equations and by the Methusalem programme of the Ghent University Special Research Fund (BOF)
(Grant number 01M01021).
}

\keywords{Group Fourier transform. $L^2$-estimate. Maximal function. Graded Lie group. Noncommutative harmonic analysis.}
\subjclass[2020]{42B25, 22E27.}

\begin{abstract} Bourgain  in his seminal paper \cite{Bourgain1986} about the analysis of maximal functions associated to convex bodies,  has estimated in a sharp way the $L^2$-operator norm of the maximal function associated to a kernel $K\in L^1,$ with differentiable Fourier transform $\widehat{K}.$  We formulate the extension to Bourgain's $L^2$-estimate in the setting of maximal functions on graded Lie groups. Our criterion is formulated in terms of the group Fourier transform of the kernel. We discuss the application of our main result to the $L^p$-boundedness of maximal functions on graded Lie groups. 
\end{abstract} 

\maketitle

\allowdisplaybreaks
\tableofcontents

\section{Introduction}
Let $G$ be a homogeneous (nilpotent) Lie group and let us consider a measurable function $K\in L^1(G).$ Extending a result due to Bourgain  \cite{Bourgain1986} we estimate the constant $\Gamma(K)$ that makes valid the $L^2$-inequality
\begin{equation}
    \Vert \sup_{t>0}|f\ast K_{t}|\Vert_{L^2(G)}\lesssim \Gamma(K)\Vert f \Vert_{L^2(G)},\,f\in C^{\infty}_0(G).
\end{equation}The quantity $\Gamma(K)$ involves information of the group Fourier transform of the kernel $K.$ Here, $K_t(x):=t^{-Q}K\circ D_{t^{-1}}(x)$ where $Q$ is the homogeneous dimension of the group $G,$ and $\{D_t\}_{t>0},$ denotes its  family of dilations. Since our criteria are presented in terms of the symbols of Rockland operators we consider $G$ to be a graded Lie group. Our main result is Theorem \ref{main:theorem} of Subsection \ref{Section:mainresult}. To illustrate our main result we are going to recall some aspects related to Bourgain's $L^2$-estimate \cite{Bourgain1986} as well as some of its consequences in the analysis of maximal functions on $\mathbb{R}^n$. 

\subsection{$L^2$-estimates for maximal functions on $\mathbb{R}^n$}

Bourgain in \cite{Bourgain1986} proved  that the maximal function 
\begin{equation}
    M_{B}f(x)=\sup_{r>0}\frac{1}{\textnormal{Vol}(r\cdot B)}\smallint_{r\cdot B}|f(x+y)|dy,
\end{equation}
associated to a centrally symmetric convex body in $\mathbb{R}^n,$ satisfies the $L^2$-operator norm estimate
\begin{equation}\label{B}
    \Vert M_Bf \Vert_{L^2(\mathbb{R}^n)}\leq D\Vert f\Vert_{L^2(\mathbb{R}^n)},
\end{equation}for a numerical constant $D>0$  independent of $B$ and of the dimension $n$. Bourgain's estimate in \eqref{B} extended in this general setting the celebrated results by Stein and Stein and Str\"omberg \cite{Stein1983:1,Stein1983:2} in the $L^2$-case. For a discussion on the subject we refer to the introduction of \cite[Pages 60-63]{BourgainMirekSteinWróbel}. Bourgain's argument for the proof of \eqref{B} combines two methods, the first one from Fourier analysis when estimating the $L^2$-operator norm of an arbitrary maximal function associated to an integrable kernel with differentiable Fourier transform and, the second one, from the geometrical properties of convex sets as Brunn's theorem, see e.g. Gromov and Milman \cite{Gromov:Milman}.  

The $L^2$-estimate for arbitrary maximal functions $M_Kf(x):=\sup_{t>0}|f\ast K_t(x)|$  due to Bourgain \cite{Bourgain1986} can be stated as follows. Here, we have used the notation $K_t(x)=t^{-n}K(t^{-1}x)$ for any function in the family $\{K_t\}_{t>0},$ among other things, preserving the $L^1$-norm of $K.$ Also, $\widehat{K}(\xi)=\smallint e^{-2\pi i x\cdot \xi}f(x)dx$ denotes the Fourier transform of $K.$
\begin{theorem}[Bourgain \cite{Bourgain1986}]\label{Bourgain:l2:estimate}
    Let $K\in L^1(\mathbb{R}^n)$ be such that its Fourier transform $\widehat{K}$ is differentiable. Define the following quantities
    \begin{equation}
        \alpha_j=\sup_{|\xi|\sim 2^{j}}|\widehat{K}(\xi)|,\quad \beta_j=\sup_{|\xi|\sim 2^{j}}|\langle\nabla \widehat{K}(\xi),\xi \rangle|.\footnote{Here, $\nabla K=(\partial_{x_i}K)_{1\leq i\leq n},$ denotes the gradient of $K,$ and the bracket $\langle x,y\rangle=\sum_{i=1}^{n}x_{i}y_{i},$ denotes the inner product on $\mathbb{R}^n.$}
    \end{equation}Then, we have the following estimate on the maximal operator associated to $K,$
    \begin{equation}\label{Sharp:estimate}
        \Vert \sup_{t>0}|f\ast K_t|  \Vert_{L^2(\mathbb{R}^n)}\leq C\Gamma(K)\Vert f \Vert_{L^2(G)},
    \end{equation}
    for every $f\in \mathscr{S}(\mathbb{R}^n),$ where 
    \begin{equation}\label{Gamma:K}
       \Gamma(K)=\sum_{j\in \mathbb{Z}}\alpha_j^{\frac{1}{2}}(\alpha_j+\beta_j)^{\frac{1}{2}}. 
    \end{equation}
\end{theorem}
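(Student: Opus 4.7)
The plan is to run a Littlewood--Paley decomposition on the frequency side of $K$ and estimate the maximal operator associated to each dyadic piece separately. Choose a smooth nonnegative partition of unity $\{\psi_j\}_{j\in\mathbb{Z}}$ on $\mathbb{R}^n\setminus\{0\}$ with $\psi_j$ supported in the shell $\{|\xi|\sim 2^j\}$ and homogeneous of degree $0$ in the sense that $\psi_j(\xi)=\psi_0(2^{-j}\xi)$. Define $K^{(j)}$ by $\widehat{K^{(j)}}(\xi):=\psi_j(\xi)\widehat{K}(\xi)$, so that formally $K=\sum_{j}K^{(j)}$ and $M_Kf\leq \sum_{j}M_{K^{(j)}}f$. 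Reducing to each $j$ shifts the burden to proving the dyadic piece estimate
\begin{equation}\label{dyadic}
\bigl\|\sup_{t>0}|f\ast K^{(j)}_t|\bigr\|_{L^2(\mathbb{R}^n)}\;\leq\;C\,\alpha_j^{1/2}(\alpha_j+\beta_j)^{1/2}\,\|f\|_{L^2(\mathbb{R}^n)},
\end{equation}
after which summation in $j$ reproduces $\Gamma(K)$.

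The workhorse for \eqref{dyadic} is the elementary square-function identity: for $g_j(t,x):=(f\ast K^{(j)}_t)(x)$, the fundamental theorem of calculus gives
\begin{equation*}
\sup_{t>0}|g_j(t,x)|^{2}\;\leq\;2\int_{0}^{\infty}|g_j(t,x)|\,|\partial_t g_j(t,x)|\,dt,
\end{equation*}
and a weighted Cauchy--Schwarz in $t$ (weights $t$ and $t^{-1}$) yields
\begin{equation*}
\sup_{t>0}|g_j(t,x)|^{2}\;\leq\;2\Bigl(\int_{0}^{\infty}|g_j(t,x)|^{2}\tfrac{dt}{t}\Bigr)^{1/2}\Bigl(\int_{0}^{\infty}|t\partial_t g_j(t,x)|^{2}\tfrac{dt}{t}\Bigr)^{1/2}.
\end{equation*}
Integrating in $x$ and applying Cauchy--Schwarz once more reduces \eqref{dyadic} to bounding two Littlewood--Paley type square functions in $L^2_x$.

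Both square functions are controlled by Plancherel. Using $\widehat{f\ast K^{(j)}_t}(\xi)=\widehat{f}(\xi)\widehat{K^{(j)}}(t\xi)$ and switching the order of integration,
\begin{equation*}
\int_{\mathbb{R}^n}\int_{0}^{\infty}|g_j(t,x)|^{2}\tfrac{dt}{t}\,dx\;=\;\int_{\mathbb{R}^n}|\widehat{f}(\xi)|^{2}\Bigl(\int_{0}^{\infty}|\widehat{K^{(j)}}(t\xi)|^{2}\tfrac{dt}{t}\Bigr)d\xi;
\end{equation*}
since $\psi_j$ localises $t\xi$ to $|t\xi|\sim 2^j$, the inner $t$-integral runs over an interval of logarithmic length $O(1)$ where $|\widehat{K^{(j)}}|\leq\alpha_j$, hence the first factor is bounded by $C\alpha_j^{2}\|f\|_{L^2}^{2}$. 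For the second factor one observes $t\partial_t\widehat{K^{(j)}}(t\xi)=\langle t\xi,\nabla\widehat{K^{(j)}}(t\xi)\rangle$; expanding $\nabla(\psi_j\widehat{K})(\eta)$ at $\eta=t\xi$ gives one contribution with the derivative on $\psi_j$, controlled by $|\langle\eta,\nabla\psi_j(\eta)\rangle|\lesssim 1$ times $\alpha_j$, and a second contribution with the derivative on $\widehat{K}$, controlled by $\beta_j$. Therefore $|t\partial_t\widehat{K^{(j)}}(t\xi)|\lesssim\alpha_j+\beta_j$ on the support, and the same Plancherel computation yields a bound $C(\alpha_j+\beta_j)^{2}\|f\|_{L^2}^{2}$ for the second square function.

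Combining the two estimates through Cauchy--Schwarz in $x$ yields \eqref{dyadic}, and summing in $j$ produces the factor $\Gamma(K)$ in \eqref{Sharp:estimate}. The main obstacle in carrying this out rigorously is verifying the measurability / pointwise validity of the FTC identity for the sup (standard via approximation of $K$ by Schwartz functions, using the hypothesis $\widehat{K}\in C^{1}$ to justify differentiating $g_j$ in $t$), together with the bookkeeping of the cutoff $\psi_j$ so that the commutator term $\langle\eta,\nabla\psi_j(\eta)\rangle\widehat{K}(\eta)$ is seen to contribute only an $\alpha_j$-factor and not to spoil the homogeneity of the sum.
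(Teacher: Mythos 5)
Your argument is correct, but it is not the route the paper takes: the paper never proves Theorem \ref{Bourgain:l2:estimate} directly, quoting it from Bourgain and recovering it as the Euclidean special case ($\mathcal{R}=-\Delta$) of Theorem \ref{main:theorem}, whose proof in Section \ref{Proof:section} is the relevant comparison. Both arguments begin with the same dyadic frequency decomposition and both ultimately rest on the fundamental theorem of calculus plus Cauchy--Schwarz, but the mechanism for taming the supremum in $t$ differs genuinely. The paper splits $(0,\infty)$ into the intervals $[2^{\ell},2^{\ell+1})$, places an $A_j$-point net in each, controls the sup by the values at the net points plus integrals of the $s$-derivative between consecutive points, and only at the end optimises the free parameter $A_j\sim\alpha_j^{-1}(\alpha_j+\beta_j)$ to produce the geometric mean $\alpha_j^{1/2}(\alpha_j+\beta_j)^{1/2}$; it must then resum over $\ell$ using the spectral projections $E_{j-\ell}(\pi)$ and an almost-orthogonality argument. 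Your continuous $g$-function version --- $\sup_t|g_j|^2\leq 2\bigl(\smallint|g_j|^2\,\tfrac{dt}{t}\bigr)^{1/2}\bigl(\smallint|t\partial_tg_j|^2\,\tfrac{dt}{t}\bigr)^{1/2}$ --- reaches the same geometric mean in one stroke from the weights $t^{\pm1/2}$, with no net, no optimisation, and no $\ell$-sum: the localisation of $\widehat{K^{(j)}}(t\xi)$ to a $t$-interval of bounded logarithmic length does all the orthogonality work inside a single Plancherel--Fubini computation. What each approach buys: yours is shorter and cleaner in the scalar Euclidean setting, and the two loose ends you flag (vanishing of $g_j(t,x)$ as $t\to0^+$ or $t\to\infty$ to justify the FTC identity, and the commutator term $\langle\eta,\nabla\psi_j(\eta)\rangle\widehat{K}(\eta)$ contributing only an $\alpha_j$-factor by homogeneity of $\psi_j$) both check out routinely; the paper's discretised bookkeeping is heavier but is phrased so as to transfer essentially verbatim to operator-valued symbols $\widehat{K}(\pi)\eta_j(\pi(\mathcal{R}))$ on the unitary dual of a graded group, which is the point of the generalisation.
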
 There are several reasons making Theorem \ref{Bourgain:l2:estimate} interesting by itself. First, the constant $\Gamma(K)$ in \eqref{Gamma:K} is sharp in some sense, see \cite[Page 1475]{Bourgain1986}. On the other hand, it has been useful in the analysis of the $L^2$-theory of maximal functions, for convex bodies and for the sphere $\mathbb{S}^{n-1}\subset \mathbb{R}^{n}$, as we describe in the following remarks. 
\begin{remark}
Let $\chi_A$ be the characteristic function of an arbitrary set $A\subset \mathbb{R}^n.$ A direct application of Theorem \ref{Bourgain:l2:estimate} to $K(x)=\chi_B-P_L$\footnote{Here, $\{P_t\}_{t>0}$ denotes the Poisson kernel defined via $\widehat{P}_t(\xi)=e^{-t|\xi|}$, $t=L(B)$ is a suitable constant introduced by Milman, see \cite[Page 1469]{Bourgain1986} for details.} provides the existence of the numerical constant $D$ in \eqref{B}, see \cite[Page 1473]{Bourgain1986}.    
\end{remark}
\begin{remark}
Consider the full maximal function on $\mathbb{R}^n$
\begin{equation}\label{Full}
   {M}^{d\sigma}_{F}f(x)=\sup_{r>0}\left|\smallint\limits_{\mathbb{S}^{n-1}}f(x-ry)d\sigma(y)\right|,\,n\geq 2,
\end{equation}where $d\sigma$ is the surface measure on the sphere $\mathbb{S}^{n-1}.$ For $n\geq 3,$ by an appropriate splitting procedure $d\sigma=d\sigma_1+d\sigma_2,$ and denoting by $M_1$ and $M_2$ the corresponding maximal functions for the measures   $d\sigma_1$ and $d\sigma_2,$ respectively, the $L^2$-theory of $M_2$ has been established in \cite{Bourgain1985}, leading to the proof of the restricted weak $(\frac{n}{n-1},1)$ inequality for ${M}^{d\sigma}_{F}.$ Note that the interpolation of the equivalent to this result in the setting of Lorentz spaces (namely, the boundedness of ${M}^{d\sigma}_{F}:L^{p,1}\rightarrow L^{p,\infty}$ with $p=\frac{n}{n-1}$) with the boundedness of ${M}^{d\sigma}_{F}$ on $L^{\infty}$ provides an alternative proof  of  the $L^p$-boundedness result due to Stein  \cite{Stein1976}, for the spherical maximal function  ${M}^{d\sigma}_{F}$ in the range $p>\frac{n}{n-1}.$    
\end{remark}
\begin{remark}
  Bourgain in  \cite{Bourgain1986:2} proved the $L^p$-boundedness of the spherical maximal function  ${M}^{d\sigma}_{F}$ in the lower dimension case  $n=2,$ for all $p>2.$ As it was pointed out in  \cite[Page 70]{Bourgain1986:2} the $L^2$-boundedeness of ${M}^{d\sigma}_{F}$ fails, but the analysis of its $L^2$-behaviour is important to understand its $L^p$-behaviour. Moreover, if one consider the regularised version
  \begin{equation}\label{Full:pt}
   {M}^{d\sigma}_{\varepsilon}f(x)=\sup_{r>0}\left|\smallint\limits_{\mathbb{S}^{n-1}}f(x-ry)(d\sigma\ast P_{\varepsilon})(y)\right|,\,n\geq 2,
\end{equation} the $L^2$-operator norm of ${M}^{d\sigma}_{\varepsilon}$ can be estimated as follows
\begin{equation}
    \Vert {M}^{d\sigma}_{\varepsilon} f\Vert_{L^2(\mathbb{R}^n)}\lesssim \log\left(\frac{1}{\varepsilon}\right)\Vert f\Vert_{L^2(\mathbb{R}^n)}.
\end{equation}Bourgain proved this inequality of norms using Theorem \ref{Bourgain:l2:estimate}, see \cite[Page 70]{Bourgain1986:2}.
\end{remark}
\begin{remark}
    Theorem \ref{Bourgain:l2:estimate} has been fundamental as well its consequences in further works about the $L^2$-theory of maximal functions in the Euclidean setting. For instance, see the references \cite{Bourgain2014,BourgainMirekSteinWróbel,Carbery1986}. In the next subsection we present our extension to Bourgain's estimate \eqref{Sharp:estimate} in Theorem \ref{Bourgain:l2:estimate} to the setting of graded Lie groups.
\end{remark}

\subsection{$L^2$-estimates for maximal functions on graded Lie groups}\label{Section:mainresult}
The analysis of the $L^p$-boundnedness of maximal functions on graded Lie groups and more general homogeneous Lie groups has been a problem of wide interest with a long tradition, see Folland and Stein \cite{FollandStein1982}. There has been several criteria for establishing these $L^p$-boundendness results. On graded Lie groups criteria in terms of the symbols of Rockland operators have been introduced in the work \cite{CardonaDelgadoRuzhanskyDyadic} by the author and M. Ruzhansky for the dyadic maximal function and in \cite{Cardona2024:Jan} for the full maximal function.   The understanding of the $L^2$-theory for these operators is fundamental in Littlewood-Paley's theory in the setting of graded Lie groups, and the analysis of this problem is the main goal of this manuscript. 

To present our main Theorem \ref{main:theorem} we introduce the required notation. In what follows, $G$ denotes a graded Lie group. This means that its Lie algebra admits a graduation $\mathfrak{g}=\mathfrak{g}_1\oplus \mathfrak{g}_2\oplus \cdots \oplus \mathfrak{g}_s,$ with $[\mathfrak{g}_i,\mathfrak{g}_j]\subset \mathfrak{g}_{i+j}.$ When $i+j>s,$ $\mathfrak{g}_{i+j}=\{0\},$ see Section \ref{Preliminaries} for details. Here, $\mathcal{R}$ denotes an arbitrary positive Rockland operator which, by definition, is a left-invariant  positive hypoelliptic partial differential operator on $G.$ We denote by $\nu>0$ the homogeneity degree of $\mathcal{R}.$  Here $\eta\in C^{\infty}_0(\mathbb{R})$ denotes a compactly supported function on the interval $I_\nu=[2^{-\nu},2^{\nu}].$ For every $j\in \mathbb{Z},$ define $\eta_{j}(t):=\eta(2^{-(j+1)}t).$ We assume that $\eta$ generates a partition of unity, namely, that
$ 
        \sum_{j\in \mathbb{Z}}\eta_{j}(t)=1,
$ $\forall t>0.$ 
The operator $\eta_j(\mathcal{R})$ is defined by the spectral calculus of the Rockland operator $\mathcal{R}$. Let $\widehat{G}$ be the unitary dual of $G.$ Each operator $\eta_j(\mathcal{R})$ is a Fourier multiplier on $L^2(G)$ and $\eta_j(\pi(\mathcal{R})),$ $\pi\in \widehat{G},$ denotes its symbol. Our main theorem is the following extension to the aforementioned result due to Bourgain.

\begin{theorem}\label{main:theorem}
    Consider a kernel $K\in L^1(G)$  on a graded Lie group $G.$ Define the quantities
    \begin{equation}\label{alpha:j:beta:j}
       \alpha_j= \sup_{\pi\in \widehat{G}}\Vert \widehat{K}(\pi)\eta_j(\pi(\mathcal{R}))\Vert_{\textnormal{op}},\quad \beta_j=\sup_{s>0;\pi\in \widehat{G}}\Vert s\frac{d}{ds}\{\widehat{K}(s\cdot \pi )\}\eta_j((s\cdot\pi)(\mathcal{R}))\Vert_{\textnormal{op}}.
    \end{equation}Then, we have the following estimate on the maximal operator associated to $K,$ 
    \begin{equation}
        \Vert \sup_{t>0}|f\ast K_t|  \Vert_{L^2(G)}\leq C\Gamma(K)\Vert f \Vert_{L^2(G)},
    \end{equation}
    for every $f\in \mathscr{S}(G),$ where 
    \begin{equation}\label{Gamma:K:2}
       \Gamma(K)=\sum_{j\in \mathbb{Z}}\alpha_j^{\frac{1}{2}}(\alpha_j+\beta_j)^{\frac{1}{2}}. 
    \end{equation}
\end{theorem}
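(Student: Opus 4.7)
\medskip
\noindent\textbf{Proof plan.} The approach adapts Bourgain's argument, with the scalar Fourier-multiplier calculus replaced by the operator-valued group Fourier transform and the functional calculus of $\mathcal R$. First, decompose the kernel spectrally: set $\widehat{K^{(j)}}(\pi):=\widehat K(\pi)\eta_j(\pi(\mathcal R))$, so that $K=\sum_{j\in\mathbb Z}K^{(j)}$ as $L^{2}$-Fourier multipliers and $\widehat{K_t^{(j)}}(\pi)=\widehat K(t\cdot\pi)\eta_j((t\cdot\pi)(\mathcal R))$. By the triangle inequality, the theorem reduces, via summation in $j$, to the single-scale estimate
\[
\bigl\|\sup_{t>0}|f*K_t^{(j)}|\bigr\|_{L^{2}(G)}\lesssim \alpha_j^{1/2}(\alpha_j+\beta_j)^{1/2}\|f\|_{L^{2}(G)},\qquad j\in\mathbb Z.
\]

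\noindent The single-scale estimate is obtained from the pointwise inequality $\sup_{t>0}|u(t)|^{2}\le 2\int_0^\infty|u(s)|\,|s\,\partial_s u(s)|\,ds/s$ with $u(s)=(f*K_s^{(j)})(x)$: integrating in $x\in G$ and applying Cauchy--Schwarz reduces the problem to the two square-function bounds
\[
\int_0^\infty\|f*K_s^{(j)}\|_{L^{2}}^{2}\tfrac{ds}{s}\lesssim \alpha_j^{2}\|f\|_{L^{2}}^{2},\qquad \int_0^\infty\|s\,\partial_s(f*K_s^{(j)})\|_{L^{2}}^{2}\tfrac{ds}{s}\lesssim (\alpha_j+\beta_j)^{2}\|f\|_{L^{2}}^{2}.
\]
The second follows from the first after the Leibniz expansion
\[
s\,\partial_s\bigl[\widehat K(s\cdot\pi)\eta_j((s\cdot\pi)(\mathcal R))\bigr]=\bigl[s\,\partial_s\widehat K(s\cdot\pi)\bigr]\eta_j((s\cdot\pi)(\mathcal R))+\widehat K(s\cdot\pi)\cdot\nu\,\tilde\eta(2^{-(j+1)}(s\cdot\pi)(\mathcal R)),
\]
with $\tilde\eta(u):=u\eta'(u)$ still compactly supported in $I_\nu$: the first summand contributes $\beta_j$ by the very definition of $\beta_j$, and the second contributes $O(\alpha_j)$ by applying the same square-function argument to the modified cutoff $\tilde\eta$.

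\noindent The first square-function bound is the technical core. By Plancherel on $G$, it amounts to the uniform-in-$\pi$ estimate $\int_0^\infty\|\widehat f(\pi)\widehat K(s\cdot\pi)\eta_j((s\cdot\pi)(\mathcal R))\|_{HS}^{2}\,ds/s\lesssim \alpha_j^{2}\|\widehat f(\pi)\|_{HS}^{2}$. Diagonalising $\pi(\mathcal R)$ on an orthonormal basis $\{e_k\}$ of eigenvectors with eigenvalues $\lambda_k>0$ (admissible because $\pi(\mathcal R)$ is positive self-adjoint with compact resolvent, by the Rockland property), the spectral cutoff becomes multiplication by $\eta_j(s^\nu\lambda_k)$ on $e_k$, so that the $k$-th column of the symbol is $\eta_j(s^\nu\lambda_k)\widehat K(s\cdot\pi)e_k$, of norm at most $\alpha_j$ (the pointwise interpretation of the definition of $\alpha_j$) and supported in the dyadic range $s^\nu\lambda_k\sim 2^{j+1}$ of fixed logarithmic length. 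The change of variables $u=s^\nu\lambda_k$ then yields a $(k,\pi,j)$-independent factor $\nu^{-1}\|\eta\|_{L^{2}(du/u)}^{2}$; combining the column bounds with the HS identity $\|\widehat f(\pi)\|_{HS}^{2}=\sum_k\|\widehat f(\pi)e_k\|^{2}$ and integrating over $\widehat G$ gives the claim. The main technical obstacle lies precisely here: because $\widehat K(s\cdot\pi)$ and $\eta_j((s\cdot\pi)(\mathcal R))$ do not commute and the spectrum of $\pi(\mathcal R)$ is infinite-discrete, the union over eigenvalues $\lambda_k$ of the $s$-supports of $\eta_j(s^\nu\lambda_k)$ is generically the whole half-line, so $\int_0^\infty\|\widehat K(s\cdot\pi)\eta_j((s\cdot\pi)(\mathcal R))\|_{\mathrm{op}}^{2}\,ds/s$ is not uniformly bounded in $\pi$; one must therefore exploit the Hilbert--Schmidt structure against $\widehat f(\pi)$ column-by-column in the spectral basis, so that each eigenvector contributes only on its own narrow dyadic $s$-range and the column norm $\alpha_j$ (rather than a larger quantity such as $\|\widehat K(s\cdot\pi)\|_{\mathrm{op}}$) governs the estimate.
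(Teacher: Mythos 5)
Your argument is correct and reaches the same constant $\Gamma(K)$, but the reduction from the maximal function to quantitative bounds is genuinely different from the paper's. The paper, for each $j$ and each dyadic block $2^{\ell}\le t<2^{\ell+1}$, discretises $t$ into an $A_j$-point net, applies the fundamental theorem of calculus between net points, bounds the resulting two terms by $A_j\alpha_j^2$ and $A_j^{-1}(\alpha_j+\beta_j)^2$ respectively, and then optimises over the free parameter $A_j$ to produce the geometric mean $\alpha_j^{1/2}(\alpha_j+\beta_j)^{1/2}$. You instead use the continuous inequality $\sup_t|u(t)|^2\le 2\int_0^\infty |u(s)|\,|s\partial_s u(s)|\,\frac{ds}{s}$ followed by Cauchy--Schwarz, which yields the geometric mean directly from the two square-function bounds with no discretisation or optimisation; this is shorter and arguably cleaner, at the cost of having to justify the vanishing of $f\ast K^{(j)}_s$ at an endpoint ($s\to\infty$ works, since $K^{(j)}=(\eta_j(\mathcal R)\delta)\ast K\in L^2$). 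The substantive core --- why the $ds/s$ integral (respectively the paper's sum over $\ell$) converges despite $\|\widehat K(s\cdot\pi)\eta_j((s\cdot\pi)(\mathcal R))\|_{\mathrm{op}}\le\alpha_j$ holding for \emph{all} $s$ --- is identical in both proofs: the cutoff $\eta_j((s\cdot\pi)(\mathcal R))$ confines the relevant part of $\widehat f(\pi)$ to the spectral window $s^\nu\lambda\sim 2^{(j+1)\nu}$, so each spectral component of $f$ is hit only on a bounded $\log s$-range, and almost orthogonality of the pieces closes the estimate. You correctly isolate this as the technical obstacle.

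Two bookkeeping points, neither a real gap. First, your ``column'' accounting is stated loosely: since the cutoff sits between $\widehat K(s\cdot\pi)$ and $\widehat f(\pi)$, applying the symbol to an eigenvector $e_k$ does not act on $\widehat f(\pi)e_k$ by the scalar $\eta_j(s^\nu\lambda_k)$; the clean implementation is to insert the spectral projection $E_{\pi(\mathcal R)}(\mathrm{supp}\,\eta_j(s^\nu\cdot))$ adjacent to $\widehat f(\pi)$, bound the remaining factor in operator norm by $\alpha_j$, and apply Fubini in $(\lambda,s)$ to the resulting $\|E\,\widehat f(\pi)\|_{\mathrm{HS}}^2$ --- this also avoids any appeal to discreteness of the spectrum of $\pi(\mathcal R)$, which the paper never uses. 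Second, your term $\widehat K(s\cdot\pi)\,\nu\tilde\eta(\cdot)$ with $\tilde\eta(u)=u\eta'(u)$ is controlled by $\alpha_j$ only up to the observation that replacing the cutoff $\eta$ by another bounded function supported in the same dyadic window changes $\alpha_j$ by at most a bounded reindexing (so one really gets $\sum_{|m|\le M}\alpha_{j+m}$, harmless after summing in $j$); the paper needs exactly the same fact for its function $\rho$ and records it in Remark \ref{remark} / Remark \ref{remark:2}, so you should state it explicitly as well.
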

We note that the smoothness condition on $\eta$ can be removed. Indeed, one can assume that $\eta$ is a continuous piecewise function generating a partition of unity, see Remark \ref{remark}.  Theorem \ref{main:theorem} applied to the Euclidean setting, that is $(\mathbb{R}^n,+)$ with the usual isotropic dilation, and with $\mathcal{R}=-\Delta_x$ being the positive Laplacian recovers Theorem \ref{Bourgain:l2:estimate}.   Indeed, note that in the Euclidean setting $\pi(\mathcal{R})$ is replaced by $|\xi|^2$ where $\xi\in \mathbb{R}^n,$ is the Fourier variable. We note that Theorem \ref{main:theorem} provides substantial information on the Littlewood-Paley components of the Fourier transform $\widehat{K}$ related to the Rockland operator $\mathcal{R}.$ Note also, that the hypothesis $K\in L^1(G),$ implies that the corresponding maximal operator $M_{K}:f\mapsto \sup_{t>0}|f\ast K_t|,$ is bounded on $L^{\infty}(G)$\footnote{Indeed, note that $|f\ast K_t(x)|\leq \Vert f\ast K_t\Vert_{L^\infty(G)}\leq \Vert f\Vert_{L^\infty(G)}\Vert K_t\Vert_{L^{1}(G)}= \Vert f\Vert_{L^\infty(G)}\Vert K\Vert_{L^{1}(G)},$ in view of Young's inequality. From where one deduces that $\Vert M_K f\Vert_{L^{\infty}(G)}\leq \Vert K\Vert_{L^1(G)}\Vert f\Vert_{L^{\infty}(G)}.$}. The Riesz-Thorin interpolation theorem implies the following $L^p$-boundedness result.
\begin{corollary} Consider a kernel $K\in L^1(G)$  on a graded Lie group $G$  and assume that $\Gamma(K)$ in \eqref{Gamma:K:2} is finite. Then, we have the following $L^p$-estimate on the maximal operator associated to $K,$
\begin{equation}
        \Vert \sup_{t>0}|f\ast K_t|  \Vert_{L^p(G)}\leq C_p\Gamma(K)^{\frac{2}{p}}\Vert K\Vert_{L^1(G)}^{1-\frac{2}{p}}\Vert f \Vert_{L^p(G)},
    \end{equation} for every $f\in \mathscr{S}(G),$ and for all $2\leq p\leq \infty.$
\end{corollary}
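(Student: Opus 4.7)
The plan is to interpolate between two endpoint estimates. The $L^2$-endpoint is Theorem \ref{main:theorem}, giving $\|M_K f\|_{L^2(G)}\leq C\Gamma(K)\|f\|_{L^2(G)}$. The $L^\infty$-endpoint $\|M_K f\|_{L^\infty(G)}\leq \|K\|_{L^1(G)}\|f\|_{L^\infty(G)}$ is the trivial bound recorded in the footnote just before the corollary statement: it follows from Young's convolution inequality together with the fact that the dilation preserves the $L^1$-norm, $\|K_t\|_{L^1(G)}=\|K\|_{L^1(G)}$ for every $t>0$.

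Since $M_K$ is only sublinear, Riesz--Thorin cannot be invoked directly, so my plan is to linearize. Fix a countable dense subset $\{t_n\}_{n\geq 1}\subset (0,\infty)$, define the truncated maximal operator $M_K^N f(x):=\max_{1\leq n\leq N}|(f\ast K_{t_n})(x)|$, and use measurable selection to choose an index map $n^{*}\colon G\to \{1,\dots,N\}$ and a sign $\varepsilon^{*}\colon G\to \{\pm 1\}$ so that $M_K^N f(x)=\varepsilon^{*}(x)(f\ast K_{t_{n^{*}(x)}})(x)$. The resulting operator $T_N f(x):=\varepsilon^{*}(x)(f\ast K_{t_{n^{*}(x)}})(x)$ is linear in $f$ and satisfies $|T_N f|=M_K^N f\leq M_K f$ pointwise, so it inherits both endpoint bounds with constants independent of $N$ and of the selection:
\begin{equation*}
    \|T_N f\|_{L^\infty(G)}\leq \|K\|_{L^1(G)}\|f\|_{L^\infty(G)},\qquad \|T_N f\|_{L^2(G)}\leq C\Gamma(K)\|f\|_{L^2(G)}.
\end{equation*}

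Now I would apply the Riesz--Thorin interpolation theorem to the linear operator $T_N$ with $\theta=2/p\in [0,1]$, so that $1/p=(1-\theta)/\infty+\theta/2$. This yields
\begin{equation*}
    \|T_N f\|_{L^p(G)}\leq (C\Gamma(K))^{2/p}\|K\|_{L^1(G)}^{1-2/p}\|f\|_{L^p(G)},\quad 2\leq p\leq \infty,
\end{equation*}
with constant independent of $N$. Since $|T_N f|=M_K^N f\nearrow \sup_{n\geq 1}|(f\ast K_{t_n})(\cdot)|$ pointwise as $N\to\infty$, the monotone convergence theorem transfers the bound to this countable supremum, with the same constant $C^{2/p}\Gamma(K)^{2/p}\|K\|_{L^1(G)}^{1-2/p}$, which matches the corollary with $C_p:=C^{2/p}$.

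The only delicate point, and the one I expect to require the most care in the write-up, is the reduction from the continuous supremum $\sup_{t>0}$ defining $M_K f$ to the countable supremum over $\{t_n\}$; this is standard given the density of $\{t_n\}$ in $(0,\infty)$, but since $K$ is merely $L^1$ the map $t\mapsto (f\ast K_t)(x)$ need not be continuous pointwise. One resolves this either by using $f\in \mathscr{S}(G)$ together with an approximation of $K$ by dense functions in $L^1$ for which continuity in $t$ is available, or by interpreting $M_K f$ as a supremum over a countable dense set from the outset, which is the standard convention in this area.
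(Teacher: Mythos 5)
Your proposal is correct and follows exactly the route the paper intends: the paper offers no written proof beyond invoking the Riesz--Thorin theorem between the $L^2$ bound of Theorem \ref{main:theorem} and the trivial $L^\infty$ bound recorded in the footnote. Your linearization via measurable selection (and the reduction of the continuous supremum to a countable one) is the standard device needed to make that one-line invocation rigorous for the sublinear operator $M_K$, and it is carried out correctly.
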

This paper is organised as follows. In Section \ref{Preliminaries} we present some basics about the Fourier analysis on nilpotent Lie groups and the spectral calculus of Rockland operators. In Section \ref{Proof:section} we present the the proof of our main Theorem \ref{main:theorem}.

\subsubsection*{Notation and remarks} We write $A\lesssim B$ when $A$ is less than $B$ up to a constant and $A\asymp B$ (or $A\sim B$) if $A\lesssim B$ and $B\lesssim A.$ For a bounded linear operator $T:H\rightarrow H$ defined on a Hilbert space $(H,\Vert\cdot\Vert_{H}),$ the quantity $\Vert T\Vert_{\textnormal{op}}=\sup_{\Vert x\Vert_{H}\leq 1}\Vert T(x)\Vert_{H}$ denotes its operator norm.  On a Lie group $G,$  we write $(x,y)\mapsto xy$ for the group operation and $x\mapsto x^{-1}$ for the inversion mapping. Here $e=e_{G}$ denotes the neutral element of $G.$   The Lie algebra of $G$ will be denoted by $\mathfrak{g}.$ The Haar measure of a nilpotent Lie group $G$  will be denoted by $dx.$ The corresponding $L^p$-spaces for the measure $dx$ will be denoted by $L^{p}(G)=L^p(G,\,dx).$ The case $p=2$ is crucial in this work. Since the group $G$ is nilpotent, in view of the identification $G\cong \mathbb{R}^n$ we can define the Schwartz class $\mathscr{S}(G)\cong \mathscr{S}(\mathbb{R}^n),$ and the corresponding space of distributions $\mathscr{D}'(G),$ see Section \ref{Preliminaries}.  A partial differential operator $P: C^\infty(G)\rightarrow \mathscr{D}'(G)$ is (globally) hypoelliptic, if the equation $Pu=f\in C^{\infty}(G)$ with $u\in \mathscr{D}'(G)$ imply that $u\in C^\infty(G),$ see H\"ormander \cite{HormanderBook34}.
The class of hypoelliptic differential operators of interest for this work is the family of Rockland operators, see Section \ref{Preliminaries}.

\section{Fourier analysis on nilpotent Lie groups and Rockland operators}\label{Preliminaries}

In this section, we present a consistent background of the Fourier analysis on graded Lie groups and on the spectral calculus of Rockland operators. We follow the notation related to the {\it quantisation on graded Lie groups} as in \cite{FischerRuzhanskyBook}. We introduce it as follows.

\subsubsection{Nilpotent Lie groups}\label{Nilpotent}
Let us consider a Lie group $G$ and let $(\mathfrak{g},[\cdot,\cdot])$ be its Lie algebra. Let us use the notation $\textnormal{ad}X(Y):=[X,Y]$ for the adjoint action $\textnormal{ad}: \mathfrak{g}\rightarrow \textnormal{End}(\mathfrak{g}).$ The Lie algebra $\mathfrak{g}$ is {\it nilpotent,} if for each $X\in \mathfrak{g},$ $(\textnormal{ad}X)^{k}=0$ for some $k\in \mathbb{N}.$ 
A {\it nilpotent Lie group} $G$ is a Lie group whose Lie algebra $\mathfrak{g}\cong T_{e}G$ is nilpotent.  Here we recall that the linear adjoint action on the Lie algebra $\mathfrak{g}$ of $G$ is denoted by $\textnormal{Ad}:G\rightarrow\textnormal{GL}(\mathfrak{g}) $ and defined as follows. Each element $g$ of  $G$ induces an inner automorphism of the Lie group $G$ by the formula $\textnormal{I}(g):x\in G\mapsto gxg^{-1}.$ Its differential $\textnormal{Ad}(g): \mathfrak{g}\rightarrow \mathfrak{g}$ gives an automorphism of the Lie algebra $\mathfrak{g}.$ The resulting representation $\textnormal{Ad}:G \rightarrow \textnormal{GL}(\mathfrak{g})$ is called  the adjoint representation of $G.$ The differential of the adjoint representation $\textnormal{Ad}:G\rightarrow \textnormal{GL}(\mathfrak{g})$ gives rise to to the  linear representation  $\textnormal{ad}: \mathfrak{g}\rightarrow \textnormal{End}(\mathfrak{g}),$ defined above as $\textnormal{ad}X(Y)=[X,Y].$ 

If $G$ is a connected and simply connected nilpotent Lie group then we have that
$\textnormal{exp}:\mathfrak{g}\rightarrow G$ is an analytic diffeomorphism. Among other things this diffeomorphism allows us to identify $G\cong \mathfrak{g}$ (and then $\mathfrak{g}\cong \mathbb{R}^{\dim(\mathfrak{g})}$) and define the Schwartz class $\mathscr{S}(G)\cong \mathscr{S}(\mathfrak{g}) $ and the space of distributions $\mathscr{D}'(G)$ using the exponential mapping.  
 
\subsubsection{The unitary dual $\widehat{G}$} Let $G$ be a simply connected nilpotent Lie group. We say that a mapping $\pi:G\rightarrow \textnormal{End}(H_{\pi}),$ for some separable Hilbert space $H_\pi,$ is a continuous, unitary and irreducible  representation of $G,$ if it satisfies the algebraic properties (i) and (iii), and the analytic property (ii) below:
\begin{itemize}
    \item[(i)] $\pi\in \textnormal{Hom}(G, \textnormal{U}(H_{\pi})),$  i.e. $\pi(xy)=\pi(x)\pi(y)$ and for the  adjoint of $\pi(x),$ $\pi(x)^*=\pi(x^{-1}),$ for every $x,y\in G.$ 
    \item[(ii)] The map $(x,v)\mapsto \pi(x)v, $ from $G\times H_\pi$ into $H_\pi$ is continuous. 
    \item[(iii)] For every $x\in G,$ and $W_\pi\subset H_\pi,$ if $\pi(x)W_{\pi}\subset W_{\pi},$ then $W_\pi=H_\pi$ or $W_\pi=\{0\}.$ 
\end{itemize}
Two unitary representations $ \pi\in \textnormal{Hom}(G,\textnormal{U}(H_\pi))$  and $\eta\in \textnormal{Hom}(G,\textnormal{U}(H_\eta))$  are {\it equivalent} if there exists a bounded linear mapping $T:H_\pi\rightarrow H_\eta$ such that for any $x\in G,$ $T\pi(x)=\eta(x)T.$ The mapping $T$ is called an intertwining operator between $\pi$ and $\eta.$ The relation $\sim$ on the set of unitary and irreducible representations $\textnormal{Rep}(G)$ defined by: {\it $\pi\sim \eta$ if and only if $\pi$ and $\eta$ are equivalent representations,} is an equivalence relation. The quotient set
$
    \widehat{G}:={\textnormal{Rep}(G)}/{\sim},
$ is called the {\it unitary dual} of $G.$

\subsubsection{Plancherel theorem on nilpotent Lie groups}  
The group Fourier transform of $f\in L^1(G), $  is defined by 
\begin{equation*}
   f\mapsto \mathscr{F}_{G}(f)\equiv \widehat{f},\,\quad  \widehat{f}(\pi):=\smallint\limits_{G}f(x)\pi(x)^*dx:H_\pi\rightarrow H_\pi,\quad \forall \pi \in\widehat{G}.
\end{equation*}
 The Schwartz space on the dual $\widehat{G}$ is defined by the image under the Fourier transform of the Schwartz space $\mathscr{S}(G),$ that is  $\mathscr{F}_{G}:\mathscr{S}(G)\rightarrow \mathscr{S}(\widehat{G}):=\mathscr{F}_{G}(\mathscr{S}(G)).$ 
If we identify a representation $\pi$ with its equivalence class, $[\pi]=\{\pi':\pi\sim \pi'\}$,  for every $\pi\in \widehat{G}, $ the Kirillov trace character $\Theta_\pi$ defined by (see Kirillov \cite{Kirillov1962})  $$[\Theta_{\pi},f]:
=\textnormal{Tr}(\widehat{f}(\pi)),$$ is a tempered distribution on $\mathscr{S}(G).$ The Fourier inversion formula can be written in terms of Kirillov's character as follows
\begin{equation*}
  \forall f\in L^1(G)\cap L^2(G),\,\, f(x)=\smallint\limits_{\widehat{G}}\textnormal{Tr}[\pi(x)\widehat{f}(\pi)]d\pi=:\mathscr{F}_{G}^{-1}[\widehat{f}\,](x),\,\,x\in G.
\end{equation*}
Here, $d\pi$ denotes the Plancherel measure on $\widehat{G}.$ The $L^2$-space on the dual is defined by the completion of $\mathscr{S}(G)$ with respect to the norm
\begin{equation}
    \Vert \sigma\Vert_{L^2(\widehat{G})}:=\left(\smallint\limits_{G}\|\sigma(\pi)\|_{\textnormal{HS}}^2d\pi \right)^{\frac{1}{2}},\,\,\sigma(\pi)\in \mathscr{S}(\widehat{G}),
\end{equation}where $\|\cdot\|_{\textnormal{HS}}$ denotes the Hilbert-Schmidt norm of operators on every representation space. The corresponding inner product on $L^2(\widehat{G})$ is given by
\begin{equation}
    (\sigma,\tau)_{L^2(\widehat{G})}:=\smallint\limits_{G}\textnormal{Tr}[\sigma(\pi)\tau(\pi)^{*}]d\pi,\,\,\sigma,\tau\in L^2(\widehat{G}),
\end{equation}where the notation $\tau(\pi)^{*}$ indicates the adjoint operator. Then,
the Plancherel theorem says that $\Vert f\Vert_{L^2(G)}=\Vert \widehat{f}\Vert_{L^2(\widehat{G})}$ for all $f\in L^2(G).$

An important subspace of each representation space $H_{\pi}$ is the one determined by the set of smooth vector $H_\pi^{\infty},$   that is, the space of vectors $v\in {H}_{\pi}$ such that the function $x\mapsto \pi(x)v,$ $x\in \widehat{G},$ is smooth. The space of smooth vectors $H_\pi^{\infty}$ is a dense sub-space of  ${H}_{\pi},$ see \cite[Page 42]{FischerRuzhanskyBook}. Finally, we record that the group convolution $f\ast g,$ of two functions, namely, the mapping defined via $f\ast g(x)=\smallint_Gf(xy^{-1})g(y)dy,$ $f,g\in L^{1}(G),$ under suitable conditions satisfies the identity $\widehat{g}(\pi)\widehat{f}(\pi)=\mathscr{F}_{G}[f\ast g](\pi).$

\subsubsection{Rockland operators on graded Lie groups}\label{Rockland:operators} A connected, simply connected nilpotent Lie group $G$ is {\it graded} if its Lie algebra $\mathfrak{g}$ may be decomposed as the direct sum of subspaces $  \mathfrak{g}=\mathfrak{g}_{1}\oplus\mathfrak{g}_{2}\oplus \cdots \oplus \mathfrak{g}_{s},$  such that the following bracket conditions are satisfied: 
$[\mathfrak{g}_{i},\mathfrak{g}_{j} ]\subset \mathfrak{g}_{i+j},$ where  $ \mathfrak{g}_{i+j}=\{0\}$ if $i+j\geq s+1.$ Each graded Lie group is nilpotent and there is a class of homogeneous operators that classifies the family of graded Lie groups among the family of nilpotent Lie groups, namely, the family of (left-invariant hypoelliptic partial differential operators, also called) {\it Rockland operators}.  We introduce this family in what follows.   Assume the Lie algebra $\mathfrak{g}$ to be endowed with a family of dilations $\{D_r\}_{r>0},$ compatible with its graduation, namely, for every $r>0,$ $D_{r}\in \textnormal{End}(\mathfrak{g}),$ and  $D_{r}$ is a mapping of the form
$ D_{r}=\textnormal{Exp}(\ln(r)A), $ 
for some diagonalisable linear operator $A\equiv \textnormal{diag}[\nu_1,\cdots,\nu_n]:\mathfrak{g}\rightarrow \mathfrak{g}.$ We call  the eigenvalues of the matrix $A,$ $\nu_1,\nu_2,\cdots,\nu_n,$ the {\it dilations weights} or {\it weights} of $G$. The homogeneous dimension of $G$ is given by 
$$  Q=\textnormal{Tr}(A)=\nu_1+\cdots+\nu_n.  $$ Note that the identification $G\cong \mathfrak{g}$ induced by the exponential coordinates allows us to define a family of dilations on $G$ that we denote also by $\{D_{r}\}_{r>0}.$ We write $D_r(x)=r\cdot x,$ $\forall x\in G,$ to simplify the notation. Then,
we record that a continuous linear operator $T:C^\infty(G)\rightarrow C^\infty(G)$ is homogeneous of  degree $\nu_T\in \mathbb{C}$ if, for every $r>0,$ the following equality holds 
\begin{equation*}
T(f\circ D_{r})=r^{\nu_T}(Tf)\circ D_{r},\,\forall f\in C^{\infty}(G),\,\forall r>0.
\end{equation*}
Let us consider $\pi\in\widehat{G}.$ We denote by ${H}_{\pi}^{\infty}$ the set of smooth vectors, also called G\r{a}rding vectors, formed by those $v\in {H}_{\pi},$ such that the function $x\mapsto \pi(x)v,$ $x\in \widehat{G},$ is smooth.  Then, a {\it Rockland operator} is a left-invariant partial  differential operator $$ \mathcal{R}=\sum_{[\alpha]=\nu}a_{\alpha}X^{\alpha}:C^\infty(G)\rightarrow C^{\infty}(G),\quad [\alpha]:=\sum_{i=1}^{n}\alpha_{i}\nu_i,$$  which is homogeneous of positive degree $\nu=\nu_{\mathcal{R}}$ and such that, it satisfies the {\it Rockland condition}, namely, for every unitary irreducible non-trivial representation $\pi\in \widehat{G},$ its symbol $\pi(\mathcal{R})$ defined via the Fourier inversion formula by
\begin{equation}\label{Symbol:R}
   \mathcal{R}f(x)= \smallint\limits_{\widehat{G}}\textnormal{Tr}[\pi(x)\pi(\mathcal{R})\widehat{f}(\pi)]d\pi,\,\,x\in G,
\end{equation}
is injective on ${H}_{\pi}^{\infty}.$ Note that $\sigma_{\mathcal{R}}(\pi)=\pi(\mathcal{R})$ coincides with the infinitesimal representation of $\mathcal{R}$ as an element of the universal enveloping algebra $\mathfrak{U}(\mathfrak{g})$.

If the Rockland operator $\mathcal{R}$ is symmetric, then $\mathcal{R}$ and $\pi(\mathcal{R})$ admit self-adjoint extensions on $L^{2}(G)$ and ${H}_{\pi},$ respectively.
Now if we preserve the same notation for their self-adjoint
extensions and we denote by $E_{\mathcal{R}}$ and $E_{\pi(\mathcal{R})}$  their spectral measures, we will denote by
\begin{equation}\label{Functional:identities:spectral:calculus}
    \psi(\mathcal{R})=\smallint\limits_{-\infty}^{\infty}\psi(\lambda) dE_{\mathcal{R}}(\lambda),\,\,\,\textnormal{and}\,\,\,\pi(\psi(\mathcal{R}))\equiv \psi(\pi(\mathcal{R}))=\smallint\limits_{-\infty}^{\infty}\psi(\lambda) dE_{\pi(\mathcal{R})}(\lambda),
\end{equation}
the functions defined by the functional calculus. In general, we will reserve the notation $\{dE_A(\lambda)\}_{0\leq\lambda<\infty}$ for the spectral measure associated with a positive and self-adjoint operator $A$ on a Hilbert space $H.$ Moreover if $f\in L^{\infty}(\mathbb{R}^{+}_0),$ we have that
\begin{equation}\label{op:inequality}
    \Vert f(A)\Vert_{\textnormal{op}}=\left\Vert \smallint_{0}^{\infty}f(\lambda)dE_A(\lambda) \right\Vert_{\textnormal{op}}\leq \Vert f\Vert_{L^{\infty}(\mathbb{R}^{+}_0)}.
\end{equation}

\section{Estimation of maximal functions using the Fourier transform}\label{Proof:section}

In this section we present the proof of Theorem \ref{main:theorem}.
Before presenting our main result we introduce an additional notation and recall some fundamental facts. 
For every $\pi\in \widehat{G},$ let us define for any $r>0,$ and any $x\in G,$ 
$(r\cdot \pi)(x):=\pi(r\cdot x). $ 
 Then, for each test function $f\in C^{\infty}_0(\mathbb{R}^+_0),$  $f( r\cdot \pi(\mathcal{R}))=f({r^{\nu}\pi(\mathcal{R})}),$ see Lemma 4.3 of \cite{FischerRuzhanskyBook}. We  use the notation 
\begin{equation}\label{start:notation}
    K_{t}:=t^{-Q}K(t^{-1}\cdot),\quad t>0,
\end{equation} and the following identity for the Fourier transforms $\widehat{K}_t$ and $\widehat{K},$ of $K_t$ and $K,$ respectively,
\begin{equation}\label{Eq:dilatedFourier}
    \widehat{K}_{t}(\pi)=\smallint\limits_{G}t^{-Q}K(t^{-1}\cdot x)\pi(x)^*dx=\smallint\limits_{G}K(y)\pi(t\cdot y)^{*}dy=\widehat{K}(t\cdot \pi),
\end{equation}always that $K\in L^1(G).$ In \eqref{Eq:dilatedFourier} we have applied the changes of variables $t^{-1}\cdot x\mapsto y,$ and we have used the new volume element $dy=t^{-Q}dx.$ We also note that the mapping $\frac{d}{ds}\{\widehat{K}(s\cdot \pi )\},$ can be defined as a densely defined operator on each representation space $H_{\pi},$ taking the set of smooth vectors $H_\pi^{\infty}$ as its domain, see e.g. \cite[Section 3]{Cardona2024:Jan}.
\begin{remark}\label{remark}
    Note  that $\alpha_j=\alpha_{j}^{(\eta)}$ and $\beta_{j}^{(\eta)}$ defined by 
 \begin{equation}
       \alpha_j= \sup_{\pi\in \widehat{G}}\Vert \widehat{K}(\pi)\eta_j(\pi(\mathcal{R}))\Vert_{\textnormal{op}},\quad \beta_j=\sup_{s>0;\pi\in \widehat{G}}\Vert s\frac{d}{ds}\{\widehat{K}(s\cdot \pi )\}\eta_j((s\cdot\pi)(\mathcal{R}))\Vert_{\textnormal{op}},
    \end{equation}initially are quantities depending on $\eta.$ However, if we consider $\phi$ being a positive and continuous piecewise function on $\mathbb{R},$ supported in a closed interval $I=[c,d],$ and generating a partition of unity
     $ 
        \sum_{j\in \mathbb{Z}}\phi_{j}(t)=1,\,\phi_{j}(t)=\phi(2^{-(j+1)}t), \,t>0,
    $  and we define
\begin{equation}
       \alpha_j^{(\phi)}= \sup_{\pi\in \widehat{G}}\Vert \widehat{K}(\pi)\phi_j(\pi(\mathcal{R}))\Vert_{\textnormal{op}},\quad \beta_j^{(\phi)}=\sup_{s>0;\pi\in \widehat{G}}\Vert s\frac{d}{ds}\{\widehat{K}(s\cdot \pi )\}\phi_j((s\cdot\pi)(\mathcal{R}))\Vert_{\textnormal{op}},
    \end{equation}we have that $\alpha_j^{(\phi)}\asymp \alpha_{j}^{(\eta)},$  $\beta_j^{(\phi)}\asymp \beta_{j}^{(\eta)}.$  So, in view of these estimates for the constants $\alpha_{j}^{(\eta)},\beta_{j}^{(\eta)},$ we simplify the notation and we just write  $\alpha_{j},\beta_{j},$ as in \eqref{alpha:j:beta:j}, for any $j\in \mathbb{Z}$. We continue this discussion, namely, the proof of the previous fact in Remark \ref{remark:2}.
\end{remark}

\begin{proof}[Proof of Theorem \ref{main:theorem}]
We start the proof by considering the partition of unity for the spectrum of the Rockland operator $\mathcal{R}$ generated by $\eta\in C^\infty_0(\mathbb{R}),$  
\begin{equation}\label{ref:R:D}
 \forall\lambda\in (0,\infty),\,  \sum_{j=-\infty}^\infty\eta(2^{j\nu}\lambda)=1.
\end{equation}We have that $\textnormal{supp}(\eta)\subset [1/2^{\nu},2^{\nu}].$ As a consequence of \eqref{ref:R:D}, the spectral calculus of Rockland operators implies that
$ 
    \sum_{j=-\infty}^\infty\eta(2^{j\nu}\mathcal{R})=I,
$ as well as the convergence of $ 
    \sum_{j=-\infty}^\infty\eta(2^{j\nu}\mathcal{R})\delta=\delta,
$ in $\mathscr{S}'(G)$ to the Dirac distribution $\delta.$
By defining 
\begin{equation}
    \eta_j(\mathcal{R})=\smallint_{0}^\infty\eta_{j}(\lambda)dE_{\mathcal{R}}(\lambda),\,\,\eta_j(\lambda)=\eta(2^{-(j+1)\nu}\lambda),\, \textnormal{supp}(\eta_j)\subset [2^{j\nu},2^{(j+2)\nu}],
\end{equation} note that
\begin{equation}
    \widehat{\eta_j(\mathcal{R})\delta }(\pi)=\eta((2^{-(j+1)}\cdot \pi)(\mathcal{R})),
\end{equation} and with $k_j$ defined by
\begin{equation}
   \widehat{k}_{j}(\pi)=\widehat{K}(\pi)\eta_j(\pi(\mathcal{R}))=\widehat{K}(\pi)\eta(2^{-(j+1)}\cdot\pi)(\mathcal{R}),
\end{equation}
we have that 
\begin{align*}
    \Vert \sup_{t>0}|f\ast k_t|\Vert_{L^2(G)}\leq \sum_{j\in \mathbb{Z}}\Vert \sup_{t>0}|f\ast (k_j)_t| \Vert_{L^2(G)}.
\end{align*}Let us fix $j\in \mathbb{Z}.$ We have that 
\begin{equation}\label{First:Ineq}
    \sup_{t>0}|f\ast (k_j)_t(x)| \leq \left(\sum_{\ell\in \mathbb{Z}} \sup_{2^{\ell}\leq t<2^{\ell+1}}|f\ast (k_j)_t(x)|^2\right)^{\frac{1}{2}}.
\end{equation}
In our further analysis we are going to estimate the $L^2$-norm of the right hand side of \eqref{First:Ineq} and later we make the summation  of these norms over $j\in \mathbb{Z}.$
In view of the Fourier inversion formula can write 
\begin{align*}
  f\ast (k_j)_t(x)=\smallint_{\widehat{G}}\textnormal{Tr}[\pi(x)\widehat{k}_{j}(t\cdot \pi)\widehat{f}(\pi)]=\smallint_{\widehat{G}}\textnormal{Tr}[\pi(x)\widehat{K}(\pi)\eta([2^{-(j+1)}\cdot t\cdot \pi](\mathcal{R}))\widehat{f}(\pi)].  
\end{align*}
Fix $t>0$ such that $2^{\ell}\leq t<2^{\ell+1}.$ The spectral calculus for Rockland operators allows us to write 
\begin{align*}
    \eta([2^{-(j+1)}\cdot t\cdot \pi](\mathcal{R}))\widehat{f}(\pi)&=\smallint\eta(2^{-(j+1)\nu}t^{\nu}\lambda)dE_{\pi(\mathcal{R})}(\lambda)\\
    &=\smallint_{2^{-\nu}\leq 2^{-(j+1)\nu}t^{\nu}\lambda \leq 2^{\nu}}\eta(2^{-(j+1)\nu}t^{\nu}\lambda)dE_{\pi(\mathcal{R})}(\lambda)\\
    &=\smallint_{2^{(j-\ell-2)\nu}\leq \lambda\leq 2^{(j-\ell+2)\nu}}\eta(2^{-(j+1)\nu}t^{\nu}\lambda)dE_{\pi(\mathcal{R})}(\lambda).
\end{align*}
For $m\in \mathbb{Z},$ let us denote
\begin{equation}
    E_m(\pi):=E_{\pi(\mathcal{R})}[2^{(m-2)\nu},2^{(m+2)\nu}].
\end{equation}Observe that with $m=j-\ell,$
\begin{align*}
   & E_m(\pi)\eta([2^{-(j+1)}\cdot t\cdot \pi](\mathcal{R}))\widehat{f}(\pi)\\
   &= E_{\pi(\mathcal{R})}[2^{(m-2)\nu},2^{(m+2)\nu}]\smallint_{2^{(j-\ell-2)\nu}\leq \lambda\leq 2^{(j-\ell+2)\nu}}\eta(2^{-(j+1)\nu}t^{\nu}\lambda)dE_{\pi(\mathcal{R})}(\lambda)\widehat{f}(\pi)\\
    &=\smallint_{2^{(j-\ell-2)\nu}\leq \lambda\leq 2^{(j-\ell+2)\nu}}\eta(2^{-(j+1)\nu}t^{\nu}\lambda)dE_{\pi(\mathcal{R})}(\lambda)E_{\pi(\mathcal{R})}[2^{(m-2)\nu},2^{(m+2)\nu}] \widehat{f}(\pi)\\
    &=\eta([2^{-(j+1)}\cdot t\cdot \pi](\mathcal{R}))E_m(\pi)\widehat{f}(\pi).
\end{align*}In consequence, 
\begin{align*}
  f\ast (k_j)_t(x)=\smallint_{\widehat{G}}\textnormal{Tr}[\pi(x)\widehat{k}_{j}(t\cdot \pi)E_{j-\ell}(\pi)\widehat{f}(\pi)]d\pi=\mathscr{F}_{G}^{-1}[\widehat{k}_{j}(t\cdot \pi)E_{j-\ell}(\pi)\widehat{f}(\pi)].  
\end{align*} Let us fix an integer $A_j\geq 1,$ and for a fixed $\ell\in \mathbb{Z},$ let $\{t_{\tau}\}_{\tau\leq A_j}$ be a $(2^{\ell}A_j^{-1})$-net in the interval $I_\ell=[2^\ell, 2^{\ell+1}].$ Le us assume that this net also includes the point $t\in [2^\ell, 2^{\ell+1}].$ By a $(2^{\ell}A_j^{-1})$-net of $I_\ell,$ we mean a partition $P_\ell=\{t_{\tau}\}_{\tau\leq A_j}$ of $I_\ell$ such that the following properties are satisfied: 
\begin{itemize}
    \item the number of elements of $P_\ell,$ namely, its cardinality is $A_j.$
    \item The partition $P_\ell=\{t_{\tau}\}_{\tau\leq A_j}$ is ordered in such a way that two consecutive elements $t_{\tau}, t_{\tau+1},$ satisfy the distance condition $|t_{\tau}-t_{\tau+1}|\leq 2^{\ell}A_j^{-1}.$
\end{itemize}
Additionally, 
\begin{itemize}
    \item  without loss of generality we have assume that $t=t_{\tau_0}$ also belongs to the partition $P_\ell.$
\end{itemize}
The fundamental theorem of calculus allows us to write 
\begin{align*}
    f\ast (k_j)_t(x) &= \mathscr{F}_{G}^{-1}[\widehat{k}_{j}(t\cdot \pi)E_{j-\ell}(\pi)\widehat{f}(\pi)]\\
    &=\mathscr{F}_{G}^{-1}[\widehat{k}_{j}(t_{\tau_0-1}\cdot \pi)E_{j-\ell}(\pi)\widehat{f}(\pi)]+\smallint_{t_{\tau_0-1}}^{t_{\tau_0}}\frac{d}{ds}\mathscr{F}_{G}^{-1}[\widehat{k}_{j}(s\cdot \pi)E_{j-\ell}(\pi)\widehat{f}(\pi)]ds.
\end{align*}In view of triangle inequality, we have that 
\begin{align*}
  & |f\ast (k_j)_t(x) |\\
  &\leq |\mathscr{F}_{G}^{-1}[\widehat{k}_{j}(t_{\tau_0-1}\cdot \pi)E_{j-\ell}(\pi)\widehat{f}(\pi)]|+ \smallint_{t_{\tau_0-1}}^{t_{\tau_0}}|\frac{d}{ds}\mathscr{F}_{G}^{-1}[\widehat{k}_{j}(s\cdot \pi)E_{j-\ell}(\pi)\widehat{f}(\pi)]|ds\\
  &\leq |\mathscr{F}_{G}^{-1}[\widehat{k}_{j}(t_{\tau_0-1}\cdot \pi)E_{j-\ell}(\pi)\widehat{f}(\pi)]|+ \smallint_{t_{\tau_0-1}}^{t_{\tau_0}+1}|\frac{d}{ds}\mathscr{F}_{G}^{-1}[\widehat{k}_{j}(s\cdot \pi)E_{j-\ell}(\pi)\widehat{f}(\pi)]|ds.
\end{align*} In consequence, summing over the points of the partition $\tilde{P_\ell}=P_\ell\setminus\{t\},$ (and enumerating these points again in order to keep the notation $t_\tau,$ $\tau\leq A_j,$ for the points in $\tilde{P_\ell}$) we have that
\begin{align*}
    &|f\ast (k_j)_t(x) |\\
    &\leq \left( \sum_{\tau}|\mathscr{F}_{G}^{-1}[\widehat{k}_{j}(t_{\tau}\cdot \pi)E_{j-\ell}(\pi)\widehat{f}(\pi)]|^2  +\left[\smallint_{t_{\tau}}^{t_{\tau}+1}|\frac{d}{ds}\mathscr{F}_{G}^{-1}[\widehat{k}_{j}(s\cdot \pi)E_{j-\ell}(\pi)\widehat{f}(\pi)]|ds\right]^2\right)^{\frac{1}{2}}.
\end{align*} Let us observe that the right hand side of the previous inequality does not depend on $t.$   In consequence
\begin{align*}
   & \sup_{2^{\ell}\leq t<2^{\ell+1}}|f\ast (k_j)_t(x) |\\
    &\leq\left( \sum_{\tau} |\mathscr{F}_{G}^{-1}[\widehat{k}_{j}(t_{\tau}\cdot \pi)E_{j-\ell}(\pi)\widehat{f}(\pi)](x)|^2  +\left[\smallint_{t_{\tau}}^{t_{\tau}+1}|\frac{d}{ds}\mathscr{F}_{G}^{-1}[\widehat{k}_{j}(s\cdot \pi)E_{j-\ell}(\pi)\widehat{f}(\pi)](x)|ds\right]^2\right)^{\frac{1}{2}}.
\end{align*}Now, by applying both,  Plancherel theorem and Minkowski integral inequality observe that
\begin{align*}
  & \left\Vert \sup_{2^{\ell}\leq t<2^{\ell+1}}|f\ast (k_j)_t(x) |   \right\Vert_{L^2(G)}^2\\
  &\leq\smallint_{G} \sum_{\tau} |\mathscr{F}_{G}^{-1}[\widehat{k}_{j}(t_{\tau}\cdot \pi)E_{j-\ell}(\pi)\widehat{f}(\pi)](x)|^2 \\
  &\hspace{2cm} +\left[\smallint_{t_{\tau}}^{t_{\tau}+1}|\frac{d}{ds}\mathscr{F}_{G}^{-1}[\widehat{k}_{j}(s\cdot \pi)E_{j-\ell}(\pi)\widehat{f}(\pi)](x)|ds\right]^2 dx\\
 &=\sum_{\tau}\smallint_{G}  |\mathscr{F}_{G}^{-1}[\widehat{k}_{j}(t_{\tau}\cdot \pi)E_{j-\ell}(\pi)\widehat{f}(\pi)](x)|^2dx\\
 &\hspace{2cm}+\smallint_{G}\left[\smallint_{t_{\tau}}^{t_{\tau}+1}|\frac{d}{ds}\mathscr{F}_{G}^{-1}[\widehat{k}_{j}(s\cdot \pi)E_{j-\ell}(\pi)\widehat{f}(\pi)](x)|ds\right]^2dx  \\
 &\leq \sum_{\tau}\smallint_{G}  |\mathscr{F}_{G}^{-1}[\widehat{k}_{j}(t_{\tau}\cdot \pi)E_{j-\ell}(\pi)\widehat{f}(\pi)](x)|^2dx \\
 &\hspace{2cm}+\left[\smallint_{t_{\tau}}^{t_{\tau}+1}\left(\smallint_{G}|\frac{d}{ds}\mathscr{F}_{G}^{-1}[\widehat{k}_{j}(s\cdot \pi)E_{j-\ell}(\pi)\widehat{f}(\pi)](x)|^2dx\right)^{\frac{1}{2}} ds\right]^2\\
 &=\sum_{\tau}\smallint_{\widehat{G}}  \Vert[\widehat{k}_{j}(t_{\tau}\cdot \pi)E_{j-\ell}(\pi)\widehat{f}(\pi)\Vert_{\textnormal{HS}}^2dx \\
 &\hspace{2cm}+\left[\smallint_{t_{\tau}}^{t_{\tau}+1}\left(\smallint_{\widehat{G}}\Vert\frac{d}{ds}[\widehat{k}_{j}(s\cdot \pi)E_{j-\ell}(\pi)\widehat{f}(\pi)]\Vert_{\textnormal{HS}}^2d\pi\right)^{\frac{1}{2}} ds\right]^2\\
 &\leq \sum_{\tau}\smallint_{\widehat{G}}  \Vert[\widehat{k}_{j}(t_{\tau}\cdot \pi)E_{j-\ell}(\pi)\widehat{f}(\pi)\Vert_{\textnormal{HS}}^2d\pi \\
 &\hspace{2cm}+\left[\smallint_{t_{\tau}}^{t_{\tau}+1}\sup_{s\sim 2^{\ell}}\left(\smallint_{\widehat{G}}\Vert\frac{d}{ds}[\widehat{k}_{j}(s\cdot \pi)E_{j-\ell}(\pi)\widehat{f}(\pi)]\Vert_{\textnormal{HS}}^2d\pi\right)^{\frac{1}{2}} ds'\right]^2\\
 &\leq \sum_{\tau}\smallint_{\widehat{G}}  \Vert[\widehat{k}_{j}(t_{\tau}\cdot \pi)E_{j-\ell}(\pi)\widehat{f}(\pi)\Vert_{\textnormal{HS}}^2d\pi \\
 &\hspace{2cm}+\left[2^\ell A_{j}^{-1}\sup_{s\sim 2^{\ell}}\left(\smallint_{\widehat{G}}\Vert\frac{d}{ds}[\widehat{k}_{j}(s\cdot \pi)E_{j-\ell}(\pi)\widehat{f}(\pi)]\Vert_{\textnormal{HS}}^2d\pi\right)^{\frac{1}{2}} \right]^2\\
 &=A+B.
\end{align*} We shall estimate the first term $A.$ In order to simplify the notation let us define $f^{(m)}=\mathscr{F}_{G}^{-1}[E_{m}(\pi)\widehat{f}(\pi)],$ for any $m\in \mathbb{Z}.$ Note that 
\begin{align*}
   A&= \smallint_{\widehat{G}}  \Vert[\widehat{k}_{j}(t_{\tau}\cdot \pi)E_{j-\ell}(\pi)\widehat{f}(\pi)\Vert_{\textnormal{HS}}^2d\pi\\ &\leq \sup_{\pi'\in \widehat{G}}\Vert \widehat{k}_j(\pi')  \Vert_{\textnormal{op}}^2\Vert \widehat{f}^{(j-\ell)}(\pi)\Vert_{L^2(\widehat{G})}^2\\
     &=\sup_{\pi'\in \widehat{G}}\Vert K(\pi')\eta_j(\pi'(\mathcal{R}))\Vert_{\textnormal{op}}^2\Vert \widehat{f}^{(j-\ell)}(\pi)\Vert_{L^2(\widehat{G})}^2\\
     &=\alpha_j^2\Vert \widehat{f}^{(j-\ell)}(\pi)\Vert_{L^2(\widehat{G})}^2.
\end{align*}
In consequence
\begin{align*}
  A=  \smallint_{\widehat{G}}  \Vert[\widehat{k}_{j}(t_{\tau}\cdot \pi)E_{j-\ell}(\pi)\widehat{f}(\pi)\Vert_{\textnormal{HS}}^2d\pi \leq \alpha_j^2 \Vert \widehat{f}^{(j-\ell)}(\pi)\Vert_{L^2(\widehat{G})}^2.
\end{align*}On the other hand, let us estimate the term
\begin{equation}
 \sqrt{B}:=   2^\ell A_{j}^{-1}\sup_{s\sim 2^{\ell}}\left(\smallint_{\widehat{G}}\Vert\frac{d}{ds}[\widehat{k}_{j}(s\cdot \pi)E_{j-\ell}(\pi)\widehat{f}(\pi)]\Vert_{\textnormal{HS}}^2d\pi\right)^{\frac{1}{2}}.
\end{equation}
Since 
$$ \frac{d}{ds}[\widehat{k}_{j}(s\cdot \pi)]=\frac{d}{ds}\{\widehat{K}(s\cdot \pi )\}\eta_j((s\cdot\pi)(\mathcal{R}))+\widehat{K}(s\cdot \pi )\frac{d}{ds}\{\eta_j((s\cdot\pi)(\mathcal{R}))\}$$
we have that
\begin{align*}
    &\left(\smallint_{\widehat{G}}\Vert\frac{d}{ds}[\widehat{k}_{j}(s\cdot \pi)]\widehat{f}^{(j-\ell)}(\pi)\Vert_{\textnormal{HS}}^2d\pi\right)^{\frac{1}{2}}\leq \left(\smallint_{\widehat{G}}\Vert\frac{d}{ds}\widehat{K}(s\cdot \pi )\eta_j((s\cdot\pi)(\mathcal{R}))\widehat{f}^{(j-\ell)}(\pi)\Vert_{\textnormal{HS}}^2d\pi\right)^{\frac{1}{2}}\\
    &\hspace{2cm}+\left(\smallint_{\widehat{G}}\Vert\widehat{K}(s\cdot \pi )\frac{d}{ds}\eta_j((s\cdot\pi)(\mathcal{R}))\widehat{f}^{(j-\ell)}(\pi)\Vert_{\textnormal{HS}}^2d\pi\right)^{\frac{1}{2}}.
\end{align*}Let $\kappa(t)$ be defined by $\kappa(t)=\eta(t^\nu).$ Note that 
\begin{align*}
   \eta_j((s\cdot\pi)(\mathcal{R}))  &=\smallint_0^{\infty}\eta(2^{-j\nu}s^{\nu}\lambda)dE_{\pi(\mathcal{R})}(\lambda)=\smallint_0^{\infty}\kappa(2^{-j}s\lambda^{\frac{1}{\nu}})dE_{\pi(\mathcal{R})}(\lambda)\\
    &=\smallint_0^{\infty}\kappa(2^{-j}st)dE_{\pi(\mathcal{R})^{\frac{1}{\nu}}}(t).
\end{align*}
The function $\kappa$ allows us to avoid unbounded terms when computing the derivative $\frac{d}{ds}\eta_j((s\cdot\pi)(\mathcal{R})).$ To estimate it in an efficient way let us make use of the functional calculus for Rockland operators. Indeed,  
\begin{align*}
    \frac{d}{ds}\eta_j((s\cdot\pi)(\mathcal{R}))=\frac{d}{ds}\smallint_0^{\infty}\kappa(2^{-j}st)dE_{\pi(\mathcal{R})^{\frac{1}{\nu}}}(t)=\smallint_0^{\infty}\kappa'(2^{-j}st)2^{-j}tdE_{\pi(\mathcal{R})^{\frac{1}{\nu}}}(t)
\end{align*}
Now, we have that
\begin{align*}
   \sqrt{B}&= 2^\ell A_{j}^{-1}\sup_{s\sim 2^{\ell}}\left(\smallint_{\widehat{G}}\Vert\frac{d}{ds}[\widehat{k}_{j}(s\cdot \pi)E_{j-\ell}(\pi)\widehat{f}(\pi)]\Vert_{\textnormal{HS}}^2d\pi\right)^{\frac{1}{2}}\\
    &\lesssim 2^\ell A_{j}^{-1}\sup_{s\sim 2^{\ell}}\left(\smallint_{\widehat{G}}\Vert\frac{d}{ds}\widehat{K}(s\cdot \pi )\eta_j((s\cdot\pi)(\mathcal{R}))\widehat{f}^{(j-\ell)}(\pi)\Vert_{\textnormal{HS}}^2d\pi\right)^{\frac{1}{2}}\\
    &+2^\ell A_{j}^{-1}\sup_{s\sim 2^{\ell}}\left(\smallint_{\widehat{G}}\Vert\widehat{K}(s\cdot \pi )\frac{d}{ds}\eta_j((s\cdot\pi)(\mathcal{R}))\widehat{f}^{(j-\ell)}(\pi)\Vert_{\textnormal{HS}}^2d\pi\right)^{\frac{1}{2}}\\
    &\lesssim  2^\ell A_{j}^{-1}\sup_{s\sim 2^{\ell}}\left(\smallint_{\widehat{G}}\Vert\frac{d}{ds}\widehat{K}(s\cdot \pi )\eta_j((s\cdot\pi)(\mathcal{R}))\Vert_{\textnormal{op}}^2\Vert\widehat{f}^{(j-\ell)}(\pi)\Vert_{\textnormal{HS}}^2d\pi\right)^{\frac{1}{2}}\\
    &+2^\ell A_{j}^{-1}\sup_{s\sim 2^{\ell}}\left(\smallint_{\widehat{G}}\Vert\widehat{K}(s\cdot \pi )\frac{d}{ds}\eta_j((s\cdot\pi)(\mathcal{R}))\widehat{f}^{(j-\ell)}(\pi)\Vert_{\textnormal{HS}}^2d\pi\right)^{\frac{1}{2}}\\
    &=I+II.
\end{align*}In order to estimate $I$, observe that
\begin{align*}
    I&=  2^\ell A_{j}^{-1}\sup_{s\sim 2^{\ell}}\left(\smallint_{\widehat{G}}\Vert\frac{d}{ds}\widehat{K}(s\cdot \pi )\eta_j((s\cdot\pi)(\mathcal{R}))\Vert_{\textnormal{op}}^2\Vert\widehat{f}^{(j-\ell)}(\pi)\Vert_{\textnormal{HS}}^2d\pi\right)^{\frac{1}{2}}\\
     &\asymp 2^\ell A_{j}^{-1}\sup_{s\sim 2^{\ell}}s^{-1}\left(\smallint_{\widehat{G}}\Vert s\frac{d}{ds}\widehat{K}(s\cdot \pi )\eta_j((s\cdot\pi)(\mathcal{R}))\Vert_{\textnormal{op}}^2\Vert\widehat{f}^{(j-\ell)}(\pi)\Vert_{\textnormal{HS}}^2d\pi\right)^{\frac{1}{2}}\\
     &\lesssim 2^\ell A_{j}^{-1}\sup_{s\sim 2^{\ell}}s^{-1}\left(\smallint_{\widehat{G}}\sup_{s'>0;\pi\in \widehat{G}}\Vert s'\frac{d}{ds'}\widehat{K}(s'\cdot \pi )\eta_j((s'\cdot\pi)(\mathcal{R}))\Vert_{\textnormal{op}}^2\Vert\widehat{f}^{(j-\ell)}(\pi)\Vert_{\textnormal{HS}}^2d\pi\right)^{\frac{1}{2}}\\
     &\asymp  2^\ell A_{j}^{-1}2^{-\ell}\left(\smallint_{\widehat{G}}\beta_j^2\Vert\widehat{f}^{(j-\ell)}(\pi)\Vert_{\textnormal{HS}}^2d\pi\right)^{\frac{1}{2}}\\
     &= A_{j}^{-1}\beta_j\Vert \widehat{f}^{(j-\ell)}(\pi)\Vert_{L^2(\widehat{G})}.
\end{align*}For the analysis of the remainder term $II,$ consider the test function $\rho(\lambda):=\kappa'(\lambda^{\frac{1}{\nu}})\lambda^{\frac{1}{\nu}},$ $\lambda>0.$ Note that  $\rho(t^{\nu}):=\kappa'(t)t,$ for all $t>0.$ Now, we proceed as follows
\begin{align*}
    &2^\ell A_{j}^{-1}\sup_{s\sim 2^{\ell}}\left(\smallint_{\widehat{G}}\Vert\widehat{K}(s\cdot \pi )\frac{d}{ds}\eta_j((s\cdot\pi)(\mathcal{R}))\widehat{f}^{(j-\ell)}(\pi)\Vert_{\textnormal{HS}}^2d\pi\right)^{\frac{1}{2}}\\
    &=2^\ell A_{j}^{-1}\sup_{s\sim 2^{\ell}}\left(\smallint_{\widehat{G}}\Vert\widehat{K}(s\cdot \pi )\smallint_0^{\infty}\kappa'(2^{-j}st)2^{-j}tdE_{\pi(\mathcal{R})^{\frac{1}{\nu}}}(t)\widehat{f}^{(j-\ell)}(\pi)\Vert_{\textnormal{HS}}^2d\pi\right)^{\frac{1}{2}}\\
     &\asymp A_{j}^{-1}\sup_{s\sim 2^{\ell}}\left(\smallint_{\widehat{G}}\Vert\widehat{K}(s\cdot \pi )\smallint_0^{\infty}\kappa'(2^{-j}st)2^{-j}\times s\times tdE_{\pi(\mathcal{R})^{\frac{1}{\nu}}}(t)\widehat{f}^{(j-\ell)}(\pi)\Vert_{\textnormal{HS}}^2d\pi\right)^{\frac{1}{2}}\\
     &=A_{j}^{-1}\sup_{s\sim 2^{\ell}}\left(\smallint_{\widehat{G}}\Vert\widehat{K}(s\cdot \pi )\smallint_0^{\infty}\rho(2^{-j\nu}s^{\nu}t^{\nu})dE_{\pi(\mathcal{R})^{\frac{1}{\nu}}}(t)\widehat{f}^{(j-\ell)}(\pi)\Vert_{\textnormal{HS}}^2d\pi\right)^{\frac{1}{2}}\\
     &=A_{j}^{-1}\sup_{s\sim 2^{\ell}}\left(\smallint_{\widehat{G}}\Vert\widehat{K}(s\cdot \pi )\smallint_0^{\infty}\rho(2^{-j\nu}s^{\nu}t)dE_{\pi(\mathcal{R})}(t)\widehat{f}^{(j-\ell)}(\pi)\Vert_{\textnormal{HS}}^2d\pi\right)^{\frac{1}{2}}\\
     &=A_{j}^{-1}\sup_{s\sim 2^{\ell}}\left(\smallint_{\widehat{G}}\Vert\widehat{K}(s\cdot \pi )\rho(2^{-j}\cdot s\cdot \pi(\mathcal{R}))\widehat{f}^{(j-\ell)}(\pi)\Vert_{\textnormal{HS}}^2d\pi\right)^{\frac{1}{2}}\\
     &\leq A_{j}^{-1}\left(\smallint_{\widehat{G}}\sup_{\pi'\in \widehat{G}}\Vert\widehat{K}(\pi' )\rho(2^{-j}\cdot\pi'(\mathcal{R}))\widehat{f}^{(j-\ell)}(\pi)\Vert_{\textnormal{HS}}^2d\pi\right)^{\frac{1}{2}}\\
      &\leq A_{j}^{-1}\left(\smallint_{\widehat{G}}\sup_{\pi'\in \widehat{G}}\Vert\widehat{K}(\pi' )\rho(2^{-j}\cdot\pi'(\mathcal{R}))\Vert_{\textnormal{op}}^2\Vert\widehat{f}^{(j-\ell)}(\pi)\Vert_{\textnormal{HS}}^2d\pi\right)^{\frac{1}{2}}\\
      &\asymp \alpha_j A_{j}^{-1}\left(\smallint_{\widehat{G}}\Vert\widehat{f}^{(j-\ell)}(\pi)\Vert_{\textnormal{HS}}^2d\pi\right)^{\frac{1}{2}}\\
      &=\alpha_jA_j^{-1}\Vert \widehat{f}^{(j-\ell)}(\pi) \Vert_{L^2(\widehat{G})}.
\end{align*}All the analysis above allows us to write the estimates below
\begin{align*}
 &   \left\Vert \sup_{2^{\ell}\leq t<2^{\ell+1}}|f\ast (k_j)_t(x) |   \right\Vert_{L^2(G)}^2\\
 &\leq \sum_{\tau}\smallint_{\widehat{G}}  \Vert[\widehat{k}_{j}(t_{\tau}\cdot \pi)E_{j-\ell}(\pi)\widehat{f}(\pi)\Vert_{\textnormal{HS}}^2d\pi \\
 &+\left[2^\ell A_{j}^{-1}\sup_{s\sim 2^{\ell}}\left(\smallint_{\widehat{G}}\Vert\frac{d}{ds}[\widehat{k}_{j}(s\cdot \pi)E_{j-\ell}(\pi)\widehat{f}(\pi)]\Vert_{\textnormal{HS}}^2d\pi\right)^{\frac{1}{2}} \right]^2\\
 \\
 &\leq \sum_{\tau}\alpha_j^2 \Vert \widehat{f}^{(j-\ell)}(\pi)\Vert_{L^2(\widehat{G})}^2+\left[   A_{j}^{-1}\beta_j\Vert \widehat{f}^{(j-\ell)}(\pi)\Vert_{L^2(\widehat{G})}+ \alpha_jA_j^{-1}\Vert \widehat{f}^{(j-\ell)}(\pi) \Vert_{L^2(\widehat{G})}\right]^2\\
 &\lesssim A_j\alpha_j^2\Vert \widehat{f}^{(j-\ell)}(\pi)\Vert_{L^2(\widehat{G})}^2+A_jA_j^{-2}(\beta_j+\alpha_j)^2 \Vert \widehat{f}^{(j-\ell)}(\pi)\Vert_{L^2(\widehat{G})}^2\\
 &=(A_j\alpha_j^2+A_j^{-1}(\beta_j+\alpha_j)^2) \Vert \widehat{f}^{(j-\ell)}(\pi)\Vert_{L^2(\widehat{G})}^2.
\end{align*}
Therefore, we have proved that 
\begin{align*}
     \left\Vert \sup_{2^{\ell}\leq t<2^{\ell+1}}|f\ast (k_j)_t(x) |   \right\Vert_{L^2(G)}\lesssim\sqrt{  A_j\alpha_j^2+A_j^{-1}(\beta_j+\alpha_j)^2 }\Vert \widehat{f}^{(j-\ell)}(\pi)\Vert_{L^2(\widehat{G})}.
\end{align*}
Let us optimise the constant inside of the square root. It can be done by 
choosing  $$A_j:=\left[\alpha_j^{-1}(\beta_j+\alpha_j)\right],$$ to be the integer part of $\alpha_j^{-1}(\beta_j+\alpha_j)>1.$ It follows that $A_j\sim \alpha_j^{-1}(\beta_j+\alpha_j),$ and consequently   $$\sqrt{  A_j\alpha_j^2+A_j^{-1}(\beta_j+\alpha_j)^2 }\sim \alpha_j^{\frac{1}{2}}(\beta_j+\alpha_j)^{\frac{1}{2}}.$$
Now, we can return to the estimate of the maximal function. Application of the estimates above gives therefore 
\begin{align*}
    \Vert \sup_{t>0}|f\ast k_t|\Vert_{L^2(G)} &\leq \sum_{j\in \mathbb{Z}}\Vert \sup_{t>0}|f\ast (k_j)_t| \Vert_{L^2(G)}\leq \sum_{j\in \mathbb{Z}}\left\Vert \left(\sum_{\ell\in \mathbb{Z}} \sup_{2^{\ell}\leq t<2^{\ell+1}}|f\ast (k_j)_t|^2\right)^{\frac{1}{2}} \right\Vert_{L^2(G)}\\
    &=\sum_{j\in \mathbb{Z}}\left( \smallint_G  \sum_{\ell\in \mathbb{Z}} \sup_{2^{\ell}\leq t<2^{\ell+1}}|f\ast (k_j)_t(x)|^2 dx \right)^{\frac{1}{2}} \\
    &= \sum_{j\in \mathbb{Z}}\left(   \sum_{\ell\in \mathbb{Z}} \smallint_G \sup_{2^{\ell}\leq t<2^{\ell+1}}|f\ast (k_j)_t(x)|^2 dx \right)^{\frac{1}{2}}\\
    &\lesssim \sum_{j\in \mathbb{Z}}\left(   \sum_{\ell\in \mathbb{Z}} \alpha_j(\alpha_j+\beta_j) \Vert \widehat{f}^{(j-\ell)}(\pi)\Vert_{L^2(\widehat{G})}^2 \right)^{\frac{1}{2}}\\
    &= \sum_{j\in \mathbb{Z}} \alpha_j^{\frac{1}{2}}(\alpha_j+\beta_j)^{\frac{1}{2}} \left(   \sum_{\ell\in \mathbb{Z}}\Vert \widehat{f}^{(j-\ell)}(\pi)\Vert_{L^2(\widehat{G})}^2 \right)^{\frac{1}{2}}\\
    &= \sum_{j\in \mathbb{Z}} \alpha_j^{\frac{1}{2}}(\alpha_j+\beta_j)^{\frac{1}{2}} \left(   \sum_{m\in \mathbb{Z}}\Vert \widehat{f}^{(m)}(\pi)\Vert_{L^2(\widehat{G})}^2 \right)^{\frac{1}{2}}\\
    &\asymp \sum_{j\in \mathbb{Z}} \alpha_j^{\frac{1}{2}}(\alpha_j+\beta_j)^{\frac{1}{2}} \Vert f\Vert_{L^2(G)},
\end{align*}where in the last line we have use the almost orthogonality of the sequence $\widehat{f}^{(m)}(\pi),$ $m\in \mathbb{Z},$ on $L^2(\widehat{G})$ and the Plancherel theorem. The proof is complete.
\end{proof}
\begin{remark}\label{remark:2} We continue with the discussion of Remark \ref{remark}.
    Indeed, note that there exists $M=M(\phi,\eta)\in \mathbb{N},$ independent of $j\in \mathbb{Z},$ such that
    \begin{align*}
   \Vert \widehat{K}(\pi)\eta_j(\pi(\mathcal{R}))\Vert_{\textnormal{op}}\leq\sum_{k=j-M}^{j+M}   \Vert\widehat{K}(\pi)\phi_k(\pi(\mathcal{R}))\eta_j(\pi(\mathcal{R}))\Vert_{\textnormal{op}}\lesssim_{M}\sum_{k=j-M}^{j+M}\alpha_{k}^{(\phi)}\Vert \eta_j(\pi(\mathcal{R}))\Vert_{\textnormal{op}}. 
    \end{align*}
Since $\Vert \eta_j(\pi(\mathcal{R}))\Vert_{\textnormal{op}}\leq \Vert \eta\Vert_{L^\infty},$ and certainly $\alpha_{j}^{(\phi)}\asymp_{M} \sum_{k=j-M}^{j+M}\alpha_{k}^{(\phi)}, $ we have that $\alpha_j^{(\eta)}\lesssim \alpha_j^{(\phi)}.$ In the same way one can prove that  $\alpha_j^{(\phi)}\lesssim \alpha_j^{(\eta)}.$ In consequence $\alpha_j^{(\phi)}\asymp \alpha_j^{(\eta)}.$ The same argument proves that $\beta_j^{(\eta)}\asymp \beta_j^{(\phi)}.$
\end{remark}

\subsubsection*{Conflict of interests statement - Data Availability Statements}  The  author states that there is no conflict of interest.  Data sharing does not apply to this article as no datasets were generated or
analysed during the current study.

\subsection*{Acknowledgements}  The author is currently a FWO fellow supported by the Research Foundation Flanders FWO  under the postdoctoral
grant No 1204824N.

\bibliographystyle{amsplain}

\end{document}